\newtheorem{theorem}{Theorem}[section]
\newtheorem{lemma}[theorem]{Lemma}
\newtheorem{corollary}[theorem]{Corollary}
\theoremstyle{definition}
\newtheorem{definition}[theorem]{Definition}
\theoremstyle{remark}
\numberwithin{equation}{section}
\begin{document}

% \title[short text for running head]{full title}
\title[The Borel Ramsey Properties]{The Borel Ramsey Properties for Countable Borel Equivalence Relations}

%    Only \author and \address are required; other information is
%    optional.  Remove any unused author tags.

%    author one information
% \author[short version for running head]{name for top of paper}
\author{Su Gao}
\address{School of Mathematical Sciences and LPMC, Nankai University, Tianjin 300071, P.R. China}
\curraddr{}
\email{sgao@nankai.edu.cn}
\thanks{The authors acknowledge the partial support of their research by the National Natural Science Foundation of China (NSFC) grants 12271263 and 12250710128.}
%
%%    author two information
\author{Ming Xiao}
\address{School of Mathematical Sciences and LPMC, Nankai University, Tianjin 300071, P.R. China}
\curraddr{}
\email{ming.xiao@nankai.edu.cn}
%%\thanks{}

%    \subjclass is required.
\subjclass[2020]{Primary 03E15; Secondary 05C55}

\date{}

%\dedicatory{}

%    "Communicated by" -- provide editor's name; required.
\commby{}

%    Abstract is required.
\begin{abstract} We define some natural notions of strong and weak Borel Ramsey properties for countable Borel equivalence relations and show that they hold for a countable Borel equivalence relation if and only if the equivalence relation is smooth. We also consider some variation of the notion for hyperfinite non-smooth Borel equivalence relations.
\end{abstract}

\maketitle

%    Text of article.

\section{Introduction}
For an integer $n>0$ and a set $X$, let $[X]^n$ denote the set of all $n$-element subsets of $X$. The celebrated infinite Ramsey theorem, proved by Ramsey in his seminal paper \cite{Ramsey1930}, is a cornerstone of infinite combinatorics.

\begin{theorem}[Ramsey] For any integer $n,k>0$ and function $c: [\mathbb{N}]^n\to \{1,\dots, k\}$, there exists $a\in\{1,\dots, k\}$ and an infinite subset $M\subseteq\mathbb{N}$ such that for all $x\in [M]^n$, $c(x)=a$.
	\end{theorem}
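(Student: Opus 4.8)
The plan is to proceed by induction on $n$, holding the argument uniform in $k$. For the base case $n=1$, the function $c$ is simply a $k$-coloring of $\mathbb{N}$ itself, so I would invoke the pigeonhole principle: since $\mathbb{N}=\bigcup_{a=1}^{k}c^{-1}(a)$ is a finite union, at least one fiber $c^{-1}(a)$ is infinite, and this fiber serves as the desired monochromatic set $M$ with color $a$.

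For the inductive step, assume the theorem for $n-1$ (for every number of colors) and let $c\colon[\mathbb{N}]^n\to\{1,\dots,k\}$ be given. The key construction is to build, by recursion, a strictly increasing sequence $a_0<a_1<a_2<\cdots$ of natural numbers together with a decreasing chain of infinite sets $\mathbb{N}=M_0\supseteq M_1\supseteq M_2\supseteq\cdots$, where at each stage I set $a_i=\min M_i$ and then consider the auxiliary $(n-1)$-coloring $c_i\colon[M_i\setminus\{a_i\}]^{n-1}\to\{1,\dots,k\}$ defined by $c_i(y)=c(\{a_i\}\cup y)$. Applying the induction hypothesis to $c_i$ yields an infinite set $M_{i+1}\subseteq M_i\setminus\{a_i\}$ and a color $b_i\in\{1,\dots,k\}$ that is homogeneous for $c_i$ on $M_{i+1}$, meaning $c(\{a_i\}\cup y)=b_i$ for every $y\in[M_{i+1}]^{n-1}$.

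This produces a sequence of colors $b_0,b_1,b_2,\dots\in\{1,\dots,k\}$, and a second application of pigeonhole gives a single color $a$ and an infinite set of indices $i_0<i_1<\cdots$ with $b_{i_j}=a$ for all $j$. I would then set $M=\{a_{i_0},a_{i_1},\dots\}$ and claim it is monochromatic of color $a$. To verify this, take any $x\in[M]^n$, let $a_\ell$ with $\ell=i_{j_0}$ be its least element, and observe that the remaining $n-1$ elements of $x$ all lie in $M_{\ell+1}$ by the nesting of the $M_i$; hence they form some $y\in[M_{\ell+1}]^{n-1}$ with $c(x)=c(\{a_\ell\}\cup y)=b_\ell=a$.

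I expect the main obstacle to be the bookkeeping in this final verification rather than any deep combinatorial difficulty: one must track carefully that every element of $x$ other than its minimum really does belong to $M_{\ell+1}$, which relies on the fact that each selected $a_{i_{j}}$ with $j>j_0$ is chosen as $\min M_{i_{j}}$ from a set nested inside $M_{\ell+1}$. Once the monotonicity of the indices and the inclusions $M_{i_j}\subseteq M_{\ell+1}$ are stated precisely, the homogeneity of $M$ follows immediately, completing the induction.
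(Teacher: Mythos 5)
Your proof is correct: the pigeonhole base case, the recursive construction of $a_i=\min M_i$ with nested infinite sets $M_{i+1}\subseteq M_i\setminus\{a_i\}$ homogeneous for the induced $(n-1)$-colorings $c_i$, the final pigeonhole on the colors $b_i$, and the verification that every $x\in[M]^n$ with least element $a_\ell$ has its remaining elements in $M_{\ell+1}$ together constitute the standard proof of the infinite Ramsey theorem, with no gaps. The paper itself states this theorem without proof (citing Ramsey's original paper), but your argument is exactly the classical one that the paper re-runs in Borel form in the proof of Lemma~\ref{lem:strongweak}---induction on $n$, fixing a least element (there, via the wellordering $\prec$) to reduce the dimension, refining to a homogeneous set, and finishing with the one-dimensional case---so your approach matches the paper's.
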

	
Here we view $[\mathbb{N}]^n$ as a hypergraph on the set $\mathbb{N}$, which we call the complete hypergraph of dimension $n$ over $\mathbb{N}$; we regard $c$ as a coloring of the hyperedges by $k$ many colors, and $M$ is often referred to as a monochromatic subset. In the arrow notation of Erd\H{o}s and Rado \cite{ER1956}, the infinite Ramsey theorem can be abbreviated as $\aleph_0\rightarrow (\aleph_0)^n_k$.
	
	In this paper, we study generalizations of the infinite Ramsey theorem in the context of invariant descriptive set theory. Specifically, we consider invariant versions of Ramsey-type properties for aperiodic countable equivalence relations. We also impose definability conditions on the equivalence relation, the coloring and the monochromatic set. For basic results of invariant descriptive set theory and undefined terminology in this paper, the reader can consult \cite{Kechris1995} or \cite{Gao2009}.

In our setting, let $X$ be a standard Borel space and let $E$ be an equivalence relation on $X$. Recall that $E$ is called a {\em Borel equivalence relation} if $E$ is a Borel subset of $X^2$, $E$ is {\em countable} if each equivalence class of $E$ is countable, and $E$ is {\em aperiodic} if each equivalence class of $E$ is infinite. Thus, if $E$ is an aperiodic, countable Borel equivalence relation on a Polish space $X$, the quotient space $X/E$ can be viewed as a uniform collection of countably infinite sets on which the Ramsey-type properties can be studied. If the Axiom of Choice is employed, then the infinite Ramsey theorem for this invariant context follows immediately from the classical infinite Ramsey theorem. As usual, if we require that the monochromatic subset be Borel, then the invariant version of the infinite Ramsey theorem can fail. The main objective of this paper is to characterize exactly when these Ramsey-type properties continue to hold in the invariant Borel context.
	
Let $E$ be a countable Borel equivalence relation on a standard Borel space $X$. For an integer $n>0$, let
$$[X]^n_E=\left\{A\in[X]^n\colon \exists x\in X\ ( A\subseteq [x]_E)\right\}, $$
where $[x]_E$ denotes the $E$-equivalence class of $x\in X$. By fixing a canonical linear ordering $<$ of $X$ and enumerating elements of any element of $[X]^n$ in the increasing order of $<$, $[X]^n_E$ can be identified with a Borel subset of $X^n$ and is thus itself a standard Borel space. For an integer $k>0$, we call a Borel function $c:[X]^n_E\to \{1,\dots, k\}$ a {\em Borel $E$-coloring} of {\em dimension} $n$ over $X$ by $k$ many colors.
	
Recall that a subset $Y\subseteq X$ is called a {\em complete section} for $E$ if for all $x\in X$, $Y\cap [x]_E\neq\varnothing$. A complete section $Y$ for $E$ is {\em infinite} if its intersection with every $E$-equivalence class is infinite. Given a Borel $E$-coloring of dimension $n$ over $X$ by $k$ many colors, a complete section $Y\subseteq X$ is {\em $E$-monochromatic} if for every $x\in X$, there is $a\in\{1,\dots, k\}$ such that for all $A\in [Y]^n$ with $A\subseteq [x]_E$, $c(A)=a$. Note that in this definition the color $a$ depends on $[x]_E$, and may vary with it.
	
We define the following Ramsey-type properties in the invariant Borel context.

\begin{definition} Let $X$ be a standard Borel space and let $E$ be a countable Borel equivalence relation on $X$. Let $n,k>0$ be integers.
\begin{enumerate}
\item[(1)] For aperiodic $E$, the {\em (strong) Borel Ramsey property} $E\rightarrow_B (E)^n_k$ means that for any Borel $E$-coloring $c$ of dimension $n$ over $X$ by $k$ many colors, there exists a Borel $E$-monochromatic, infinite complete section $Y\subseteq X$. 
\item[(2)] The {\em weak Borel Ramsey property} $E\rightarrow_B^* (E)^n_k$ means that for any Borel $E$-coloring $c$ of dimension $n$ over $X$ by $k$ many colors, there exists a Borel $E$-monochromatic complete section.
\end{enumerate}
\end{definition}

Both the strong and the weak Borel Ramsey properties for $k=1$ are trivial.  Also, for $n=1$, the Borel Ramsey properties always hold (this will be proved in Lemma~\ref{lem:n=1}).  
For $n, k>1$, it turns out that these notions are closely related to the concept of smoothness in the Borel reducibility hierarchy. Recall that for equivalence relations $E$ and $F$ on standard Borel spaces $X$ and $Y$ respectively, we say that $E$ is {\em Borel reducible} to $F$, denoted $E\leq_B F$, if there is a Borel function $f: X\to Y$ such that for all $x_1, x_2\in X$,
$$ x_1Ex_2\iff f(x_1)Ff(x_2). $$
$E$ is {\em smooth} if it is Borel reducible to the equality relation on a standard Borel space, i.e., there is a standard Borel space $Y$ such that $E\leq_B \ =\upharpoonright\! Y$.

Our main theorem is the following.

\begin{theorem}\label{thm:main} Let $X$ be a standard Borel space and let $E$ be an aperiodic, countable Borel equivalence relation on $X$. The following are equivalent:
\begin{enumerate}
\item[(i)] The Borel Ramsey property $E\rightarrow_B (E)^n_k$ holds for some integers $n, k>1$;
\item[(ii)] The Borel Ramsey property $E\rightarrow_B (E)^n_k$ holds for all integers $n, k>1$;
\item[(iii)] The weak Borel Ramsey property $E\rightarrow^*_B (E)^n_k$ holds for some integers $n,k>1$;
\item[(iv)] The weak Borel Ramsey property $E\rightarrow^*_B (E)^n_k$ holds for all integers $n,k>1$;
\item[(v)] $E$ is smooth.
\end{enumerate}
\end{theorem}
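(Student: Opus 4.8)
The plan is to establish the two substantive implications $(v)\Rightarrow(ii)$ and $(iii)\Rightarrow(v)$ and then close the cycle using only trivial observations. A strong monochromatic section is in particular a monochromatic complete section, so $E\rightarrow_B(E)^n_k$ implies $E\rightarrow^*_B(E)^n_k$ for the same parameters; together with the fact that ``for all $n,k$'' trivially yields ``for some $n,k$'', this gives $(ii)\Rightarrow(i)$, $(ii)\Rightarrow(iv)$, $(i)\Rightarrow(iii)$, and $(iv)\Rightarrow(iii)$ for free. Hence once $(v)\Rightarrow(ii)$ and $(iii)\Rightarrow(v)$ are proved, all five conditions are equivalent.

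For the positive direction $(v)\Rightarrow(ii)$, I would first use that a smooth countable Borel equivalence relation admits a Borel transversal $T$, and then, invoking the Feldman--Moore theorem to write $E$ as the orbit equivalence relation of a Borel action of a countable group, enumerate each class from its transversal representative to obtain a Borel isomorphism $\theta\colon X\to T\times\mathbb{N}$ under which $E$ becomes equality of the first coordinate. A Borel $E$-coloring $c$ then corresponds to a Borel family $(c_t)_{t\in T}$ of colorings $c_t\colon[\mathbb{N}]^n\to\{1,\dots,k\}$. The key point is that the classical recursive proof of the infinite Ramsey theorem can be carried out making only canonical choices at each stage (least available element, least color class that remains infinite), so that $c\mapsto M(c)$ is a \emph{Borel} map from colorings to infinite monochromatic subsets of $\mathbb{N}$; applying it uniformly gives Borel sets $M_t$, and $Y=\theta^{-1}(\{(t,m):m\in M_t\})$ is the desired Borel $E$-monochromatic infinite complete section, establishing $E\rightarrow_B(E)^n_k$ for all $n,k$.

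For the negative direction $(iii)\Rightarrow(v)$ I would prove the contrapositive: assuming $E$ is non-smooth, produce for every $n,k>1$ a Borel coloring with no Borel monochromatic complete section. Two reductions streamline this. First, for any Borel complete section $Y$ and any finite $m$, the map $x\mapsto Y\cap[x]_E\in[X]^m$ is, by Lusin--Novikov, a Borel $E$-invariant map that is injective on classes, hence a reduction to equality; so $\{x:|Y\cap[x]_E|=m\}$ is smooth, and therefore $\{x:|Y\cap[x]_E|<\infty\}$ is a smooth invariant Borel set. Since $E$ is non-smooth, the complementary invariant piece $B=\{x:|Y\cap[x]_E|=\infty\}$ is non-smooth, and $Y\cap B$ is a monochromatic \emph{infinite} complete section for $E\restriction B$. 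Second, the weak Ramsey property passes to restrictions to invariant Borel sets (an $E$-monochromatic complete section intersected with an invariant set is still complete and monochromatic for the restriction). Combining these, it suffices to construct a single Borel coloring admitting no Borel monochromatic infinite complete section on \emph{any} non-smooth invariant Borel piece. I would then invoke the Glimm--Effros/Harrington--Kechris--Louveau dichotomy to obtain a continuous embedding of $E_0$ into (any non-smooth invariant piece of) $E$, and define the coloring by transporting the ``parity of the last coordinate of disagreement'' coloring on $2^{\mathbb{N}}$ along this embedding.

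The hard part will be the final contradiction. The natural strategy is an ergodicity/genericity argument: a Borel monochromatic infinite section $Y$ defines an $E$-invariant Borel ``color'' function (the color witnessed on $Y\cap[x]_E$), which by generic ergodicity of $E_0$ (or by ergodicity of a suitable non-atomic quasi-invariant measure) must be constant on a comeager, respectively conull, set; one then seeks a color-reversing symmetry of the coloring to force both values and derive a contradiction, or equivalently to extract a Borel transversal in violation of non-smoothness. The principal obstacle I anticipate is that in $E$ the embedded $E_0$-classes sit inside strictly larger $E$-classes, so a complete section can ``escape'' into the non-$E_0$ part of each class; controlling this, and arranging the color-reversing symmetry to be \emph{class-preserving} (so that it yields a genuine contradiction rather than merely exchanging two a priori symmetric situations), is the crux. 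I would expect the argument to require treating the measure-carrying and the compressible cases of non-smooth $E$ somewhat differently, using an invariant ergodic measure in the former and a paradoxical/mass-transport argument in the latter.
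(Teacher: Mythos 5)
Your positive direction $(v)\Rightarrow(ii)$ is sound and is essentially the paper's own argument (Borel transversal plus Feldman--Moore, then the observation that the classical Ramsey construction with canonical choices is Borel-uniform in the class), and your reduction of the weak property to the strong property on a non-smooth invariant piece (the part of $X$ where a complete section meets classes finitely is smooth, by Lusin--Novikov) is a correct and even elegant substitute for the paper's Lemma~\ref{lem:strongweak}. The genuine gap is in the negative direction, at exactly the point you yourself flag as the crux. Transporting the coloring \emph{forward} along an embedding $f$ witnessing $E_0\sqsubseteq_B E$ yields a coloring that is non-constant only on tuples inside $f(2^\mathbb{N})$, a non-invariant Borel set; since the $E$-classes meeting $f(2^\mathbb{N})$ are in general strictly larger than the images of the $E_0$-classes, a Borel monochromatic section can avoid $f(2^\mathbb{N})$ entirely and be trivially monochromatic in the default color, on every invariant piece. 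This obstruction is not an artifact of a poorly chosen embedding: if $E$ is compressible (and non-smooth such $E$ exist), no invariant Borel $B$ can satisfy $E\!\upharpoonright\!B\cong E_0$, since an invariant copy of $E_0$ would carry an $E$-invariant probability measure; so escape cannot be ruled out by improving the embedding. Your fallback program (generic ergodicity plus a class-preserving color-reversing symmetry, split into measure-carrying and compressible cases) is never carried out, and your specific coloring --- parity of the position of last disagreement --- lacks the structure such an argument needs: for it, the relation ``$x=y$ or $c(\{x,y\})=1$'' is not transitive, so monochromatic sets are not governed by a subequivalence relation.

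The paper closes this gap by reversing the direction of transport: it shows the Ramsey property is inherited \emph{downward}, and then refutes it for $E_0$ itself. Concretely: (a) if $Y$ is a Borel infinite complete section and $E\rightarrow_B(E)^n_k$, then $E\!\upharpoonright\!Y\rightarrow_B(E\!\upharpoonright\!Y)^n_k$ (Lemma~\ref{lem:completesection}); the device is the Borel retraction $g_Y:X\to Y$ with $g_Y(x)\,E\,x$ (apply the least Feldman--Moore group element moving $x$ into $Y$), used to extend a coloring $c$ on $[Y]^n_E$ to $c'$ on $[X]^n_E$ by $c'(\{x_1,\dots,x_n\})=c(\{g_Y(x_1),\dots,g_Y(x_n)\})$; any monochromatic section $W$ for $c'$ is mapped by $g_Y$ onto a monochromatic section inside $Y$, so escape is defeated by construction; (b) consequently the property passes to arbitrary Borel subsets with aperiodic restriction (Lemma~\ref{lem:subset}) and pulls back along Borel embeddings (Lemma~\ref{lem:sqsubset}). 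Given (a) and (b), non-smoothness plus Glimm--Effros reduces everything to refuting $E_0\rightarrow_B(E_0)^2_2$: take $c(\{x,y\})$ determined by the parity of $\sum_{i<\delta(x,y)}x(i)+\sum_{i<\delta(x,y)}y(i)$ (not the parity of $\delta$); then color $1$ defines an index-two Borel subequivalence relation $F$ of $E_0$, every infinite monochromatic set lies in a single $F$-class, and the $F$-saturation $W$ of a monochromatic infinite complete section satisfies $W\cap g_0\cdot W=\varnothing$ and $W\cup g_0\cdot W=2^\mathbb{N}$ for the bit-flip $g_0=\{0\}$, contradicting the topological $0$--$1$ law; higher $n,k$ follow via $c_n(\{x_1,\dots,x_n\})=c(\{x_1,x_2\})$ with respect to a Borel linear order. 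Items (a) and (b) are the missing idea; with them in place your outline becomes the paper's proof, and no measure/compressibility case analysis is needed.
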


\section{Proof of the Main Theorem}

In this section we prove Theorem~\ref{thm:main}. However, let us first give the promised proof that the Borel Ramsey properties always hold for $n=1$.

\begin{lemma}\label{lem:n=1} Let $X$ be a standard Borel space and $E$ be a countable Borel equivalence relation on $X$. Then for any integer $k>0$, the weak Borel Ramsey property $E\rightarrow^*_B(E)^1_k$ holds. Moreover, if $E$ is aperiodic, then the Borel Ramsey property $E\rightarrow_B(E)^1_k$ holds.
\end{lemma}

\begin{proof} By the Feldman--Moore theorem (c.f., e.g., \cite[Theorem 7.1.4]{Gao2009}), there is a Borel action of a countable group $\Gamma$ on $X$ which generates $E$. Let $\{g_i\}_{i\in\mathbb{N}}$ enumerate the elements of $\Gamma$, with $g_0=1_{\Gamma}$. 

Let $c\colon [X]^1_E\rightarrow\{1,\dots, k\}$ be a Borel $E$-coloring. For each $x\in X$, define 
$$ \chi(x)=\min\left\{c(y)\colon y\in [x]_E\right\}. $$
Then $\chi$ is a Borel $E$-invariant function, i.e., $\chi$ is Borel and for any $xEx'$, $\chi(x)=\chi(x')$. It is obvious that $\chi$ is $E$-invariant. To see that $\chi$ is Borel, note that
$$ \chi(x)=j\iff \exists i\ (c(g_i\cdot x)=j)\wedge \forall j'\ \forall i\ (1\leq j'<j\rightarrow c(g_i\cdot x)\neq j'). $$
Now let 
$$ Y=\{y\in X\colon \chi(y)=c(y)\}. $$
Then $Y$ is a Borel $E$-monochromatic complete section. This proves the weak Borel Ramsey property $E\rightarrow^*_B(E)^1_k$.

Now assume additionally that $E$ is aperiodic. For each $x\in X$, define
$$ \lambda(x)=\min\left\{ c(y)\colon \mbox{$c(z)=c(y)$ for infinitely many $z\in [x]_E$}\right\}. $$
Then again $\lambda$ is a Borel $E$-invariant function. It is obvious that $\lambda$ is $E$-invariant. To see that $\lambda$ is Borel, note that $\lambda(x)=j$ if and only if
$$\forall N\ \exists i>N\ (c(g_i\cdot x)=j)\wedge \forall j'\ [1\leq j'<j\rightarrow \exists N\ \forall i>N\ (c(g_i\cdot x)\neq j')].$$
Now let 
$$Y=\{y\in X\colon \lambda(y)=c(y)\}.$$
Then by the pigeonhole principle, $Y$ is a Borel $E$-monochromatic, infinite complete section.
\end{proof}

As illustrated in the above proof, the use of the Feldman--Moore theorem allows us to express the properties using quantifiers over natural numbers, and thus verifying that the functions and sets are Borel. 

We now start the proof of Theorem~\ref{thm:main} with the following lemma which shows that (v) implies (iv). 

\begin{lemma}\label{prop:half} Let $X$ be a standard Borel space and $E$ be a countable Borel equivalence relation on $X$. If $E$ is smooth, then for any integers $n,k>1$, the weak Borel Ramsey property $E\rightarrow^*_B (E)^n_k$ holds.
\end{lemma}

\begin{proof} By a theorem of Kechris (c.f., e.g. \cite[Theorem~5.4.11]{Gao2009}), any smooth, countable Borel equivalence relation has a Borel transversal. Let $Y\subseteq X$ be a Borel transversal for $E$. Then trivially $Y$ is $E$-monochromatic. 
\end{proof}

The next lemma shows that (v) implies (ii). 

\begin{lemma}\label{lem:half} Let $X$ be a standard Borel space and $E$ be an aperiodic, countable Borel equivalence relation on $X$. If $E$ is smooth, then for any integers $n,k\geq 1$, the Borel Ramsey property $E\rightarrow_B (E)^n_k$ holds.
\end{lemma}

\begin{proof} Let $Y\subseteq X$ be a Borel transversal for $E$. Let $\sigma: X\to Y$ be the function with the graph $\{(x,y)\colon x\in X, y\in Y, xEy\}$. Then $\sigma$ is a Borel selector for $E$.
	
Fix a Borel action of a countable group $\Gamma$ on $X$ which generates $E$. Let $\{g_i\}_{i\in\mathbb{N}}$ enumerate the elements of $\Gamma$, with $g_0=1_{\Gamma}$. 
We define a wellordering $\prec$ of each $E$-class of $X$ as follows. For $xEx'$, define
$$ x\prec x'\iff \exists i\ \left[\, g_i\cdot \sigma(x)=x \wedge \forall j\leq i\ (g_j\cdot \sigma(x)\neq x')\,\right]. $$
$\prec$ is clearly Borel. For each $y\in Y$, $\prec$ wellorders $[y]_E$, with $y$ as the $\prec$-least element of $[y]_E$.	

We then proceed as in the proof of the classical infinite Ramsey theorem for each $[y]_E$, and observe that all quantifiers involved are first-order with respect to natural numbers and so the resulting set is Borel.

The proof is by induction on $n$. The case $n=1$ is given by Lemma~\ref{lem:n=1}. 
		
Now suppose that $c$ is of dimension $n>1$ and for every standard Borel space $X'$, for every smooth, aperiodic, countable Borel equivalence relation $E'$ on $X'$, and for every Borel $E'$-coloring $c'$ of dimension $n-1$ over $X'$, there is a Borel $E'$-monochromatic, infinite complete section for $c'$. Let $X_0=X$ and $Y_0=Y$. Let $c_0$ be the Borel $E$-coloring of dimension $n-1$ over $X\setminus Y$ by $k$ many colors defined by $$c_0(\{x_1,...,x_{n-1}\})=c(\{\sigma(x_1),x_1,...,x_{n-1}\}).$$
Since $Y$ is a transversal and $E$ is aperiodic, $E$ is still aperiodic on $X\setminus Y$.  Let $X_1\subseteq X\setminus Y$ be a Borel $E$-monochromatic, infinite complete section for $c_0$. Let
$$Y_1=\{ x\in X_1\,:\, \mbox{$x$ is the $\prec$-least element of $[x]_E\cap X_1$}\} $$
and
$$ c_1(\{x_1,\dots, x_{n-1}\})=c(\{y, x_1,\dots, x_{n-1}\}) $$
for $\{x_1, \dots, x_{n-1}\}\in [X_1\setminus Y_1]^{n-1}_E$, where $y\in Y_1$ is the unique element with $yEx_1$. Since only one element from each $E$-class is added into $Y_1$, $E$ is still aperiodic when restricted to $X_1\setminus Y_1$.
		
Repeating this process, we obtain infinite sequences $\{X_j\}_{j\in\mathbb{N}}, \{Y_j\}_{j\in\mathbb{N}},\{c_j\}_{j\in\mathbb{N}}$ so that the following hold for all $j\in\mathbb{N}$:
\begin{enumerate}
\item $Y_j\subseteq X_j$ is a Borel transversal of $E$ on $X$;
\item if $y\in Y_j$, $x\in X_j$ and $xEy$, then $y\prec x$;
\item $c_j$ is a Borel $E$-coloring of dimension $n-1$ over $X_j\setminus Y_j$ by $k$ many colors;
\item if $y\in Y_j$, $\{x_1,\dots, x_{n-1}\}\in[X_j\setminus Y_j]^{n-1}_E$ and $yEx_1$, then
    $$c_j(\{x_1,...,x_{n-1}\})=c(\{y,x_1,...,x_{n-1}\});$$
\item $X_{j+1}\subseteq X_j\setminus Y_j$ is a Borel $E$-monochromatic, infinite complete section for $c_j$;
\item if $j'<j$, $y'\in Y_{j'}$, $y\in Y_{j}$ and $y'Ey$, then $y'\prec y$.
\end{enumerate}
Let $Y_{\infty}=\bigcup_{j\in\mathbb{N}}Y_j$. For each fixed $y\in Y_{\infty}$, consider $Y_y=\{z\in Y_\infty\,:\, y\prec z\}$. If we define $c_y:[Y_y]^{n-1}\to \{1,\dots, k\}$ by
$$ c_y(\{x_1,\dots, x_{n-1}\})=c(\{y, x_1,\dots, x_{n-1}\}), $$
then $c_y$ is constant on $[Y_y]^{n-1}$. Let $c''(y)$ be this constant. Then $c''$ is a Borel $E$-coloring of dimension $1$ over $Y_\infty$ by $k$ many colors. By Lemma~\ref{lem:n=1}, we obtain a Borel $E$-monochromatic, infinite complete section $Z\subseteq Y_{\infty}$ for $c''$. One can readily check that $Z$ is in fact a Borel $E$-monochromatic, infinite complete section for $c$.
\end{proof}

The next lemma shows that (iv) implies (ii) and (iii) implies (i). Thus for aperiodic, countable Borel equivalence relations, the strong and the weak Borel Ramsey properties are equivalent.

\begin{lemma}\label{lem:strongweak} Let $X$ be a standard Borel space and $E$ be an aperiodic, countable Borel equivalence relation on $X$. For any integers $n,k>1$, if $E\rightarrow^*_B(E)^n_k$ then $E\rightarrow_B(E)^n_k$.
\end{lemma}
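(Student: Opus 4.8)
The plan is to deduce the strong property from the weak one by analyzing the monochromatic complete section that the weak property produces. So fix a Borel $E$-coloring $c\colon [X]^n_E\to\{1,\dots,k\}$ and apply $E\rightarrow^*_B(E)^n_k$ to obtain a Borel $E$-monochromatic complete section $Z$. The only way $Z$ can fail to witness the strong property is by meeting some classes finitely, so I would split $X$ into the two $E$-invariant Borel sets $B=\{x\colon Z\cap[x]_E\text{ is infinite}\}$ and $C=X\setminus B$ and treat them separately. Both are $E$-invariant and Borel because $E$ is countable Borel, so the cardinality of $Z\cap[x]_E$ is a Borel, $E$-invariant function of $x$. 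On $B$ there is nothing to do: $Z\cap B$ is already an infinite complete section for $E\upharpoonright B$ and it inherits monochromaticity from $Z$.

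The point of the splitting is that on $C$ the relation is smooth. Indeed, $Z\cap C$ is a Borel complete section for $E\upharpoonright C$ meeting every class in a finite, nonempty set, so $E\upharpoonright(Z\cap C)$ is a countable Borel equivalence relation all of whose classes are finite and is therefore smooth; since smoothness transfers from a complete section to the whole relation, $E\upharpoonright C$ is smooth. (If one prefers to avoid that transfer principle, one can partition $C$ into the $E$-invariant Borel pieces $C_m=\{x\colon |Z\cap[x]_E|=m\}$ and note that on each $C_m$ a transversal of the size-$m$ equivalence relation $E\upharpoonright(Z\cap C_m)$ is a transversal for $E\upharpoonright C_m$.) In particular $E\upharpoonright C$ admits a Borel transversal $T$, and, using $T$ together with a Feldman--Moore enumeration of $E$, one obtains a Borel enumeration $x\mapsto(z^x_0,z^x_1,\dots)$ of each class of $E\upharpoonright C$ by $\mathbb{N}$, depending only on $[x]_E$.

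The remaining step, and the one requiring the most care, is to produce, \emph{Borel-uniformly in $x$}, an infinite monochromatic subset on the smooth part $C$. Transporting $c$ through the enumeration gives for each class a coloring $\bar c^x\colon[\mathbb{N}]^n\to\{1,\dots,k\}$, $\bar c^x(s)=c(\{z^x_i\colon i\in s\})$, which is jointly Borel in $x$ and $s$ and constant on $E$-classes. The key observation is that the classical proof of the infinite Ramsey theorem is an effective recursive construction in the coloring: making the finitely many choices at each stage canonical (for instance, always selecting the least available element and the least color occurring infinitely often) yields an infinite monochromatic set $M^x\subseteq\mathbb{N}$ that is a Borel function of $\bar c^x$, hence of $x$. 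Then $Y=(Z\cap B)\cup\{z^x_i\colon x\in C,\ i\in M^x\}$ is a Borel, infinite, $E$-monochromatic complete section for $E$, establishing $E\rightarrow_B(E)^n_k$. The main obstacle is verifying this last Borel-uniformity claim; everything else is bookkeeping with standard facts about smooth and finite countable Borel equivalence relations.
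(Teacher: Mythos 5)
Your proposal is correct and follows essentially the same route as the paper: split off the invariant part where the weak monochromatic section already meets classes infinitely, observe that on the remaining part the restriction is smooth (the paper passes to a transversal of the finite trace), and then use a Feldman--Moore-based, class-invariant Borel enumeration to run the classical Ramsey construction canonically, noting that all choices are arithmetical in the coloring and hence Borel. The paper's inductive machinery with the sequences $X_j, Y_j, c_j$ is just its explicit bookkeeping for the same "canonicalized classical proof" step that you invoke.
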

\begin{proof}Fix a Borel action of a countable group $\Gamma$ on $X$ which generates $E$. Let $\{g_i\}_{i\in\mathbb{N}}$ enumerate the elements of $\Gamma$, with $g_0=1_{\Gamma}$.  

Let $c:[X]^n_E\to \{1,\dots,k\}$ be a Borel $E$-coloring. Let $Y\subseteq X$ be a Borel $E$-monochromatic complete section. Let
	$$ Y'=\{y\in Y\colon \mbox{$[y]_E\cap Y$ is finite}\}. $$
Then $Y'$ is Borel since 
$$ y\in Y'\iff \exists N\ \forall m\ \left[\,g_m\cdot y\in Y\rightarrow \exists i\leq N\ (g_m\cdot y=g_i\cdot y)\,\right]. $$ 
If $Y'=\varnothing$ then $Y$ is an infinite complete section and there is nothing to prove. Thus we assume $Y'\neq\varnothing$. Let $X'$ be the $E$-saturation of $Y'$. Then $X'$ is a standard Borel space, and $Y'$ is a Borel complete section of $E\!\upharpoonright\! X'$. %To prove the lemma it suffices to find a Borel $E\!\upharpoonright\! X'$-monochromatic, infinite complete section $Z$. For notational simplicity, assume $Y'=Y$ and $X'=X$, i.e., each $E\!\upharpoonright\! Y$-class is finite. Furthermore, Let $<$ be a Borel linear ordering of $X$. By picking the $<$-least element from each $E\!\upharpoonright\!Y$-class, we may assume that $Y$ is a Borel transversal for $E$, i.e., for each $x\in X$, $[x]_E\cap Y$ is a singleton. Let $\sigma: X\to Y$ be the function with the graph $\{(x,y)\,:\, x\in X, y\in Y, xEy\}$. Then $\sigma$ is a Borel selector for $E$, thus $E$ is smooth, and using Lemma \ref{lem:half} finishes the proof.
Fix a Borel linear ordering $<$ of $X$. Consider
$$ Y''=\{y\in Y\colon \mbox{$y$ is the $<$-least element of $[y]_E\cap Y$}\}. $$
Then $Y''$ is a Borel transversal for $E\!\upharpoonright\!X'$. In particular $E\!\upharpoonright\! X'$ is smooth. By Lemma~\ref{lem:half} we obtain a Borel $E\!\upharpoonright\! X'$-monochromatic, infinite complete section $Z$ on $X'$. Then $Z\cup (Y\setminus Y')$ is a Borel $E$-monochromatic, infinite complete section on $X$.
\end{proof}

In view of Lemma~\ref{lem:strongweak} we will not distinguish between the properties $E\rightarrow_B (E)^n_k$ and $E\rightarrow^*_B(E)^n_k$ for aperiodic, countable Borel equivalence relations $E$ in the remaining discussions. 

\begin{lemma}\label{lem:smallbig} Let $X$ be a standard Borel space and $E$ be an aperiodic, countable Borel equivalence relation on $X$. For any integers $N\geq n>1$ and $K\geq k>1$, if $E\rightarrow_B(E)^N_K$ then $E\rightarrow_B(E)^n_k$.
\end{lemma}

\begin{proof} Suppose $E\rightarrow_B(E)^N_K$. Let $c\colon [X]^n_E\to \{1,\dots, k\}$ be a Borel $E$-coloring. Let $<$ be a Borel linear ordering of $X$. For $\{x_1, \dots, x_N\}\in [X]^N_E$ such that $x_1<\cdots<x_N$, we define
$$ C(\{x_1,\dots, x_N\})=c(\{x_1,\dots, x_n\}). $$
Then $C$ is a Borel $E$-coloring of dimension $N$ over $X$ by $K$ many colors (in fact only $k$ many colors are used). Let $Y$ be a Borel $E$-monochromatic, infinite complete section for $C$. Let
$$ X'=\{x\in X\colon \mbox{there is a $<$-maximum element in $Y\cap [x]_E$}\}. $$
Then $X'$ is a Borel $E$-invariant subset of $X$, and $E\!\upharpoonright\!X'$ is smooth. By Lemma~\ref{lem:half}, there is a Borel $E\!\upharpoonright\!X'$-monochromatic, infinite complete section $Z$ for $c$ on $X'$. We claim that $Y\setminus X'$ is a Borel $E$-monochromatic, infinite complete section for $C$ on $X\setminus X'$. To see this, let $\{x_1,\dots, x_n\}, \{y_1,\dots, y_n\}\in [Y\setminus X']^n_E$ so that $x_1Ey_1$, $x_1<\cdots< x_n$ and $y_1<\cdots<y_n$. By the definition of $X'$, there exist $x_{n+1}, \dots, x_N\in Y\cap[x_1]_E$ and $y_{n+1}, \dots, y_N\in Y\cap [y_1]_E=Y\cap [x_1]_E$ such that
$$ x_n<x_{n+1}<\dots<x_N \mbox{ and } y_n<y_{n+1}<\dots< y_N. $$
Since $Y$ is $E$-monochromatic for $C$, we have that
$$ c(\{x_1,\dots, x_n\})=C(\{x_1,\dots, x_N\})=C(\{y_1,\dots, y_N\})=c(\{y_1,\dots, y_n\}). $$
Thus $Z\cup (Y\setminus X')$ is a Borel $E_0$-monochromatic, infinite complete section for $c$. 
\end{proof}

Next we note that the Borel Ramsey properties are preserved by infinite complete sections.
	
\begin{lemma}\label{lem:completesection} Let $X$ be a standard Borel space and $E$ be an aperiodic, countable Borel equivalence relation on $X$. Let $n,k>1$ be integers. Then the following are equivalent:
\begin{enumerate}			
\item[(a)] The Borel Ramsey property $E\rightarrow_B (E)^n_k$ holds;
\item[(b)] For every Borel infinite complete section $Y\subseteq X$, $F\rightarrow_B(F)^n_k$ holds, where $F=E\!\upharpoonright\! Y$;
\item[(c)] There is a Borel infinite complete section $Y\subseteq X$ so that $F\rightarrow_B (F)^n_k$ holds, where $F=E\!\upharpoonright\! Y$.
\end{enumerate}
\end{lemma}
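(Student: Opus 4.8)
The plan is to establish the cycle (i) $\Rightarrow$ (ii) $\Rightarrow$ (iii) $\Rightarrow$ (i). The starting point is a bookkeeping observation that makes two of the three implications routine: if $Y$ is a Borel infinite complete section and $F=E\!\upharpoonright\! Y$, then $F$ is again an aperiodic, countable Borel equivalence relation on the standard Borel space $Y$; moreover $[Y]^n_F=\{A\in[X]^n_E : A\subseteq Y\}$, and since $[y]_F=[y]_E\cap Y$, a set $Z\subseteq Y$ is a Borel $F$-monochromatic infinite complete section for a coloring if and only if it is a Borel $E$-monochromatic infinite complete section for the same coloring viewed on $X$ (because every $A\in[Z]^n$ that lies in an $E$-class already lies in $Y$, hence in the corresponding $F$-class).

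Two of the implications are then immediate. For (ii) $\Rightarrow$ (iii) there is nothing to do: since $E$ is aperiodic, $X$ itself is a Borel infinite complete section, so (iii) is just the instance $Y=X$ of (ii). For (iii) $\Rightarrow$ (i), I would fix a witnessing $Y$ and an arbitrary Borel $E$-coloring $c\colon[X]^n_E\to\{1,\dots,k\}$, restrict it to $[Y]^n_F$ to obtain a Borel $F$-coloring, and apply $F\rightarrow_B(F)^n_k$ to get a Borel $F$-monochromatic infinite complete section $Z\subseteq Y$. By the observation above, the $F$-monochromaticity of $Z$ is literally $E$-monochromaticity for $c$, and an infinite complete section for $F$ meets every $E$-class infinitely; thus $Z$ witnesses $E\rightarrow_B(E)^n_k$.

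The substance of the lemma is (i) $\Rightarrow$ (ii), and here I would transport the coloring from $Y$ to $X$ and transport the monochromatic section back, using a Borel injection $\phi\colon X\to Y$ with $\phi(x)\mathrel{E}x$ for every $x$. Given such a $\phi$ and a Borel $F$-coloring $c\colon[Y]^n_F\to\{1,\dots,k\}$, the map $\phi$ is injective and sends each $E$-class into its own $F$-class, so
\[
c^\phi(\{x_1,\dots,x_n\})=c(\{\phi(x_1),\dots,\phi(x_n)\})
\]
is a well-defined Borel $E$-coloring of dimension $n$ over $X$. Applying (i) to $c^\phi$ yields a Borel $E$-monochromatic infinite complete section $W\subseteq X$, and I then set $Z=\phi(W)$. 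Since $\phi$ is a Borel injection, $Z\subseteq Y$ is Borel; since $\phi$ is inner and injective, $Z$ meets each $F$-class in the infinite set $\phi(W\cap[x]_E)$, so $Z$ is an infinite complete section for $F$; and since every $B\in[Z]^n$ inside a class equals $\phi(A)$ for a unique $A\in[W]^n$ with $c(B)=c^\phi(A)$, the monochromaticity of $W$ transports to $F$-monochromaticity of $Z$ for $c$. Hence $Z$ witnesses $F\rightarrow_B(F)^n_k$.

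The main obstacle is therefore the construction of the Borel injection $\phi$, and this is exactly where the hypothesis that $Y$ is an \emph{infinite} complete section is essential: each $[x]_E\cap Y$ is infinite, which provides the slack needed to match $X$ injectively into $Y$ within classes. I would obtain $\phi$ from a Borel matching argument --- equivalently a Borel Schr\"oder--Bernstein theorem for complete sections of a countable Borel equivalence relation --- using the Feldman--Moore group action to enumerate each class as in the proof of Lemma~\ref{lem:strongweak} and building $\phi$ in $\omega$ stages, the infiniteness of $[x]_E\cap Y$ guaranteeing that the greedy matching never stalls while the bookkeeping remains Borel. Once $\phi$ is in hand, the remainder is the purely formal transport described above; verifying that $\phi$ can be constructed Borel-measurably and uniformly across all classes is the only genuinely nontrivial step.
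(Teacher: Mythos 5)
Your implications (ii)$\Rightarrow$(iii) and (iii)$\Rightarrow$(i) are correct and coincide with the paper's argument. The gap is in (i)$\Rightarrow$(ii): the Borel injection $\phi\colon X\to Y$ with $\phi(x)\mathrel{E}x$, on which your entire transport argument rests, does not exist in general, and no greedy Borel matching can produce it. Concretely, take $E=E_0$ on $2^{\mathbb{N}}$ and $Y=\{x\colon x(0)=0\}$, a Borel infinite complete section with $\mu(Y)=1/2$ for the uniform product measure $\mu$, which is invariant under the coordinate-flipping group $\Gamma_0$ generating $E_0$. Any Borel $\phi$ with $\phi(x)\mathrel{E_0}x$ decomposes $X$ into Borel pieces $A_i=\{x\colon \phi(x)=g_i\cdot x,\ \forall j<i\ \phi(x)\neq g_j\cdot x\}$ on which $\phi$ agrees with the group element $g_i$; if $\phi$ is injective, the images $\phi(A_i)=g_i\cdot A_i$ are pairwise disjoint, so $\mu(\phi(X))=\sum_i\mu(A_i)=\mu(X)=1$, contradicting $\phi(X)\subseteq Y$. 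So although a classwise matching of $X$ into $Y$ exists by the axiom of choice (each $[x]_E\cap Y$ is infinite), the Borel version fails --- this is the standard measure-theoretic obstruction to Borel Hall/K\"onig-type theorems, and the existence of such injections into every infinite complete section is essentially compressibility of $E$, which fails precisely for relations like $E_0$ that carry an invariant probability measure. Since the paper applies this lemma to nonsmooth relations (via the $E_0$-embedding in the main theorem), this is not a removable technicality.

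The paper's proof of (i)$\Rightarrow$(ii) works around exactly this point by giving up injectivity: using Feldman--Moore, it defines the selector $g_Y(x)=g_i\cdot x$ where $i$ is least with $g_i\cdot x\in Y$ (Borel, generally non-injective, identity on $Y$), extends a given $F$-coloring $c$ to $c'$ on $[X]^n_E$ by $c'(\{x_1,\dots,x_n\})=c(\{g_Y(x_1),\dots,g_Y(x_n)\})$ when the images form an $n$-element set and by the default color $1$ otherwise, and sets $Z=g_Y(W)$ for an $E$-monochromatic infinite complete section $W$ for $c'$. Because $g_Y$ may collapse infinitely many points of $W$ in a class, $Z$ is only guaranteed to be a monochromatic complete section, possibly finite in some classes; the paper therefore invokes Lemma~\ref{lem:strongweak} (the weak Borel Ramsey property implies the strong one) to conclude $F\rightarrow_B(F)^n_k$. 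If you replace your injection by this selector and close your argument by citing Lemma~\ref{lem:strongweak} instead of deriving infiniteness from injectivity, your proof goes through.
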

\begin{proof}
The direction (b)$\Rightarrow$(c) is trivial. To see (c)$\Rightarrow$(a) it suffices to note that a Borel infinite complete section of a Borel infinite complete section is still a Borel infinite complete section.
		
Next we prove (a)$\Rightarrow$(b). Again, by the Feldman--Moore theorem, there is a Borel action of a countable group $\Gamma$ on $X$ which generates $E$. Let $\{g_i\}_{i\in\mathbb{N}}$ enumerate the elements of $\Gamma$, with $g_0=1_{\Gamma}$. Let $Y\subseteq X$ be a Borel infinite complete section and let $F=E\!\upharpoonright\!Y$. For each $x\in X$, let $i_{Y}(x)=\min\{i\colon g_i\cdot x\in Y\}$. Write $g_{Y}(x)=g_{i_Y(x)}(x)$. So $g_{Y}$ is a Borel function with range in $Y$ and is identity on $Y$. Moreover, $g_Y(x)Ex$ for every $x\in X$.
		
Fix a Borel $F$-coloring $c:[Y]_F^n\to\{1,\dots,k\}$. In view of Lemma~\ref{lem:strongweak}, it suffices to find a Borel $F$-monochromatic complete section $Z\subseteq Y$. For this, we extend $c$ to $c'$ on $[X]^n_E$ by defining $$c'(\{x_1,\dots,x_n\})=\left\{\begin{array}{ll}c(\{g_{Y}(x_1),...,g_{Y}(x_n)\})
& \mbox{if $g_Y(x_i)\neq g_Y(x_j)$ whenever $i\neq j$,} \\
1 & \mbox{otherwise.} \end{array}\right.$$
This is clearly a Borel $E$-coloring. By (i), there is a Borel $E$-monochromatic, infinite complete section $W$ for $c'$. Let $Z=\{g_{Y}(x)\colon x\in W\}$. Clearly $Z$ is a Borel complete section of $Y$. To see that $Z$ is $E$-monochromatic, let $\{z_1,\dots, z_n\}\in [Z]^n_E$. There are $x_1,\dots,x_n\in W$ so that $g_{Y}(x_j)=z_j$ for every $1\leq j\leq n$. Since $z_j=g_Y(x_j)$ are pairwise distinct, by definition we have $$c'(\{x_1,\dots,x_n\})=c(\{g_{Y}(x_1),...,g_{Y}(x_n)\})=c(\{z_1,\dots,z_n\}).$$ 
By our choice of $W$, $c'$ is constant for all $\{x_1,\dots,x_n\}\in [W]^n_E$ within any $E$-class, therefore $c$ is constant for all $\{z_1,\dots,z_n\}\in[Z]^n_E$ within any $E$-class. In other words, $Z$ is $E$-monochromatic.			
\end{proof}

Let $E$ and $F$ be equivalence relations on standard Borel spaces $X$ and $Y$ respectively. Recall that $E$ is {\em Borel embeddable} into $F$, denoted $E\sqsubseteq_B F$, if there is an injective Borel function $f: X\to Y$ such that for all $x_1, x_2\in X$, $x_1Ex_2$ iff $f(x_1)Ff(x_2)$.
 
The following is a consequence of Lemma \ref{lem:completesection}.

\begin{lemma}\label{lem:sqsubset} Let $X, Y$ be standard Borel spaces and let $E, F$ be aperiodic, countable Borel equivalence relations on $X, Y$, respectively. Let $n,k>1$ be integers. If $E\sqsubseteq_B F$ and $F\rightarrow_B(F)^n_k$, then $E\rightarrow_B (E)^n_k$.
\end{lemma}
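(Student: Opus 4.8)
The plan is to recognize that a Borel embedding realizes $E$ as a restriction of $F$ up to Borel isomorphism, and then to let Lemma~\ref{lem:subset} do the real work. Let $f\colon X\to Y$ be an injective Borel function witnessing $E\sqsubseteq_B F$. My first step is to observe that $Z:=f(X)$ is a Borel subset of $Y$: since $f$ is an injective Borel map between standard Borel spaces, the Lusin--Souslin theorem (see, e.g., \cite{Gao2009}) guarantees that $Z$ is Borel and that $f$ is a Borel isomorphism of $X$ onto $Z$, with $f^{-1}$ Borel on $Z$. The defining property of an embedding, $x_1Ex_2\iff f(x_1)Ff(x_2)$, then says precisely that $f$ is a Borel isomorphism of $(X,E)$ onto $(Z,F\!\upharpoonright\! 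Z)$.

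Next I would check the hypotheses of Lemma~\ref{lem:subset} for the Borel set $Z$ and $G:=F\!\upharpoonright\! Z$. Because $f$ maps each class $[x]_E$ bijectively onto $[f(x)]_F\cap Z=[f(x)]_G$, and each $[x]_E$ is infinite, every $G$-class is infinite; hence $G$ is aperiodic. Lemma~\ref{lem:subset}, applied to the Borel subset $Z\subseteq Y$, the relation $F$, and the hypothesis $F\rightarrow_B(F)^n_k$, then yields $G\rightarrow_B (G)^n_k$.

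It remains to transport the Ramsey property across the Borel isomorphism $f$, which is routine bookkeeping. Given any Borel $E$-coloring $c\colon [X]^n_E\to\{1,\dots,k\}$, I would push it forward to a Borel $G$-coloring $c'$ on $[Z]^n_G$ by setting $c'(\{f(x_1),\dots,f(x_n)\})=c(\{x_1,\dots,x_n\})$; this is well defined because $f$ is injective and carries $[X]^n_E$ bijectively onto $[Z]^n_G$, and it is Borel since $f^{-1}$ is Borel on $Z$. Applying $G\rightarrow_B (G)^n_k$ produces a Borel $G$-monochromatic, infinite complete section $W\subseteq Z$; then $f^{-1}(W)\subseteq X$ is Borel, and since $f$ is an isomorphism of $(X,E)$ onto $(Z,G)$ preserving the cardinalities of intersections with classes, $f^{-1}(W)$ is a Borel $E$-monochromatic, infinite complete section for $c$. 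This establishes $E\rightarrow_B (E)^n_k$.

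I do not expect a serious obstacle here: the only point genuinely requiring care is confirming that $f(X)$ is Borel and that $f$ is a Borel isomorphism onto its image, which is exactly the Lusin--Souslin theorem. Everything else is the transfer of a coloring and a monochromatic section back and forth through $f$, with the substantive combinatorial content already absorbed into the previously proved Lemma~\ref{lem:subset}.
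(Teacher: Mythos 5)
Your proposal is correct and follows essentially the same route as the paper: Lusin--Souslin to see $f(X)$ is Borel, Lemma~\ref{lem:subset} applied to $F\!\upharpoonright\! f(X)$, then pushing the coloring forward through $f$ and pulling the monochromatic section back via $f^{-1}$. The only cosmetic difference is that you track the ``infinite'' condition on the section explicitly, while the paper produces merely a monochromatic complete section and relies on the weak/strong equivalence of Lemma~\ref{lem:strongweak}; both are fine.
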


\begin{proof}
	Note that the Borel Ramsey property is preserved by restrictions on invariant Borel subsets and by Borel isomorphisms. Now the lemma follows directly from Lemma \ref{lem:completesection}.
\end{proof}

We are now ready to prove Theorem~\ref{thm:main}. In our proof we will use the equivalence relation $E_0$ defined on the Cantor space $2^\mathbb{N}=\{0,1\}^\mathbb{N}$:
$$ xE_0y\iff \exists n\ \forall m\geq n\ x(m)=y(m). $$
By a theorem of Harrington--Kechris--Louveau, better known as the Glimm-Effros dichotomy theorem (c.f., e.g. \cite[Theorem~6.3.1]{Gao2009}), for any Borel equivalence relation $E$ on a standard Borel space $X$, either $E$ is smooth or else $E_0\sqsubseteq_B E$. In particular, $E_0$ is not smooth.

\begin{proof}[Proof of Theorem~\ref{thm:main}]
In view of Lemmas~\ref{lem:half} and \ref{lem:strongweak}, it suffices to verify the implication (i)$\Rightarrow$(v). For this we show $E_0\not\rightarrow_B (E_0)^2_2$. To see that this is sufficient, note that by Lemma~\ref{lem:smallbig} it follows that $E_0\not\rightarrow_B(E_0)^n_k$ for any $n, k>1$. Now by the Glimm--Effros dichotomy theorem and Lemma~\ref{lem:sqsubset}, if $E$ is nonsmooth, then $E_0\sqsubseteq_B E$ and therefore $E\not\rightarrow (E)^n_k$ for any $n,k>1$. This is the contrapositive of (i)$\Rightarrow$(v).

The rest of the proof is devoted to the construction of a Borel $E_0$-coloring $c$ witnessing $E_0\not\rightarrow_B (E_0)^2_2$.

For any distinct $x,y\in 2^{\mathbb{N}}$ with $xE_0y$, let
$$ D(x,y)=\{i\in\mathbb{N}\colon x(i)\neq y(i)\} $$
and define
$$ c(\{x,y\})=\left\{\begin{array}{ll}1, & \mbox{if $|D(x,y)|$ is even, } \\ 
	2, & \mbox{if $|D(x,y)|$ is odd.}
\end{array}\right.
$$
Then $c$ is Borel since the defining conditions are Borel in $x, y$. We claim that there cannot be a Borel $E_0$-monochromatic, infinite complete section for $c$. To see this, we define a new equivalence relation $F$, which refines $E_0$, by
$$ xFy\iff x=y \mbox{ or } [\,xE_0 y \mbox{ and }c(\{x,y\})=1\,]. $$
It is straightforward to check that this is indeed an equivalence relation. It is also easy to verify that each $E_0$-class consists of exactly two $F$-classes.
In fact, for each $x$, let $\bar{x}$ be obtained from $x$ by flipping its first digit, i.e., $\bar{x}(0)=1-x(0)$ and $\bar{x}(i)=x(i)$ for all $i>0$. Clearly $c(x,\bar{x}$)=2, and for every $yE_0x$, either $yFx$ or $yF\bar{x}$.

Now assume $Y$ is a Borel $E_0$-monochromatic, infinite complete section. Note that if $c$ is constant on a subset of a single $E_0$-class and the constant value is $2$, then this subset must have size at most $2$. It follows that on $[Y]^2_{E_0}$, $c$ always takes the value $1$. This means that $E_0\!\upharpoonright\!Y=F\!\upharpoonright\!Y$.
		
Let $W$ be the $F$-saturation of $Y$. Then $W$ is Borel and $F$-invariant. In addition, letting $\bar{W}=\{\bar{x}:x\in W\}$, we have that $W\cup\bar{W}=2^\mathbb{N}$ and $W\cap\bar{W}=\varnothing$. However, by the first topological 0-1 law (c.f. \cite[Theorem~8.46]{Kechris1995}), $W$ is either meager or comeager as a subset of $2^\mathbb{N}$.  Since the map $x\to \bar{x}$ is a homeomorphism, this is a contradiction. We have shown $E_0\not\rightarrow_B (E_0)^2_2$.
\end{proof}	

Before closing this section we note that clauses (iii)--(v) of Theorem~\ref{thm:main} are equivalent without assuming aperiodicity.

\begin{theorem}\label{thm:main2} Let $X$ be a standard Borel space and let $E$ be a countable Borel equivalence relation on $X$. The following are equivalent:
\begin{enumerate}
\item[(a)] The weak Borel Ramsey property $E\rightarrow^*_B (E)^n_k$ holds for some integers $n,k>1$;
\item[(b)] The weak Borel Ramsey property $E\rightarrow^*_B (E)^n_k$ holds for all integers $n,k>1$;
\item[(c)] $E$ is smooth.
\end{enumerate}
\end{theorem}

\begin{proof} In view of Lemma~\ref{prop:half}, it suffices to show the implication (a)$\Rightarrow$(c). To do this, assume $E\rightarrow^*_B(E)^n_k$ for some $n, k>1$. Define an equivalence relation $F$ on $Y=X\times \mathbb{N}$ by
$$ (x,i)F(y,j)\iff xEy. $$
Then $F$ is an aperiodic, countable Borel equivalence relation. We claim that $F\rightarrow^*_B(F)^n_k$. In fact, let $c\colon [Y]^n_F\to \{1,\dots, k\}$ be a Borel $F$-coloring. Define $c'\colon [X]^n_E\to \{1,\dots, k\}$ by 
$$c'(\{x_1,\dots, x_n\})=c(\{(x_1,0),\dots, (x_n,0)\}). $$
Then $c'$ is a Borel $E$-coloring, and hence there is a Borel $E$-monochromatic complete section $Y\subseteq X$. The set $\{(y,0)\colon y\in Y\}$ is then a Borel $F$-monochromatic complete section.

By Theorem~\ref{thm:main}, $F$ is smooth. Since $E\sqsubseteq_B F$ via the injective map $x\mapsto (x,0)$, $E$ is also smooth.
\end{proof}

\section{Hyperfinite Nonsmooth Equivalence Relations}
 Throughout this section we fix a standard Borel space $X$ and an aperiodic, countable Borel equivalence relation $E$ on $X$. Also fix integers $n,k>1$. As a consequence of our main theorem, the Borel Ramsey property $E\rightarrow_B (E)^n_k$ can be restated as: for any Borel $E$-coloring $c: [X]^n_E\to \{1, \dots, k\}$, there exists a Borel function $S: X\to X^\mathbb{N}$ such that
\begin{enumerate}
\item[(a)] for any $x\in X$ and distinct $i,j\in\mathbb{N}$, $S(x)(i)\neq S(x)(j)$;
 \item[(b)] for any $x\in X$, the set $\{S(x)(i)\colon i\in\mathbb{N}\}$ is a monochromatic subset of $[x]_E$;
 \item[(c)] for any $x, y\in X$, $xEy$ iff $S(x)=S(y)$.
\end{enumerate}
In fact, clauses (a)--(c) together clearly imply that $\{S(x)(i)\colon i\in\mathbb{N}, x\in X\}$ is a Borel $E$-monochromatic, infinite complete section. The set is Borel since
$$ y\in \{S(x)(i)\colon i\in\mathbb{N}, x\in X\}\iff \exists i\in\mathbb{N}\ \exists x\in [y]_E\ [ y=S(x)(i)] $$
and the second quantifier can be turned into a number quantifier by the Feldman--Moore theorem. Conversely, if $Y$ is a Borel $E$-monochromatic, infinite complete section, then from the smoothness of $E$, using the Feldman--Moore theorem, one can inductively build a sequence of pairwise disjoint Borel selectors so as to give a Borel enumeration of the elements of $Y$ on each orbit $[x]_E$, which can then be coded into $S(x)$ with properties (a)--(c).

If we replace (c) by

\begin{enumerate}
\item[(c')] for any $x, y\in X$, if $xEy$ then $\{S(x)(i)\colon i\in \mathbb{N}\}=\{S(y)(i)\colon i\in\mathbb{N}\}$,
\end{enumerate}
the two statements are still equivalent since (a), (b) and (c') together still imply the Borel Ramsey Property. Thus, if we want to consider a meaningful notion of Ramsey-type property for nonsmooth equivalence relations, we have to allow the sets $\{S(x)(i)\colon i\in\mathbb{N}\}$ and $\{S(y)(i)\colon i\in \mathbb{N}\}$ to differ.

In this section we consider hyperfinite nonsmooth equivalence relations. Recall that $E$ is {\em finite} if every $E$-class is finite, and $E$ is {\em hyperfinite} if there is an increasing sequence of Borel finite equivalence relations $\{F_i\}_{i\in\mathbb{N}}$ such that $E=\bigcup_i F_i$. Thus, hyperfinite equivalence relations are necessarily countable Borel equivalence relations. By a theorem of Dougherty--Jackson--Kechris (c.f. \cite{DJK1994} or \cite[Theorem~7.2.3]{Gao2009}), $E$ is hyperfinite iff $E\leq_B E_0$ iff there is a Borel assigment $C\mapsto <_C$ associating with each $E$-class $C$ a linear ordering $<_C$ of $C$ so that there is an order-preserving map from $(C,<_C)$ into $(\mathbb{Z}, <)$. For an aperiodic hyperfinite equivalence relation $E$ we may require that the order type of $(C, <_C)$ is $\zeta$, the order type of $(\mathbb{Z}, <)$, i.e., there is an order isomorphism between $(C, <_C)$ and $(\mathbb{Z}, <)$.

An example of a nonsmooth Borel equivalence relation which is closely related to $E_0$ is the equivalence relation $E_0(X)$, defined on $X^\mathbb{N}$ for any standard Borel space $X$ by
$$ xE_0(X) y\iff \exists i\ \forall j\geq i\ x(j)=y(j). $$
When $X$ is countable, we still have $E_0(X)\leq_B E_0$ and thus $E_0(X)$ is hyperfinite. When $X$ is uncountable, $E_0(X)$ is no longer a countable Borel equivalence relation, but by another theorem of Dougherty--Jackson--Kechris (c.f. \cite{DJK1994} or \cite[Theorem~8.1.5]{Gao2009}), for any countable Borel equivalence relation $E$, if $E\leq_B E_0(X)$, then $E$ is hyperfinite. Following \cite{KL1997}, we denote $E_0(X)$ by $E_1$ when $X$ is an uncountable standard Borel space.

To formulate an appropriate Ramsey-type property for hyperfinite nonsmooth equivalence relations, a natural idea is to allow a small (finite) difference between $S(x)$ and $S(y)$ for $xEy$. This is stated precisely in the following definition.

\begin{definition} Given an $E$-coloring $c: [X]^n_E\to \{1,\dots, k\}$. A function $S:X\to X^{\mathbb{N}}$ is a {\em monochromatic reduction} to $E_1$ for $c$ if:
\begin{enumerate}
\item for any $x\in X$ and distinct $i,j\in\mathbb{N}$, $S(x)(i)\neq S(y)(j)$;
\item for any $x\in X$, the set $\{S(x)(i)\colon i\in\mathbb{N}\}$ is a monochromatic subset of $[x]_E$;
\item for any $x, y\in X$, $xEy$ iff $S(x)E_1 S(y)$.
\end{enumerate}
\end{definition}

However, we note the following negative result.

\begin{lemma} There is a Borel $E_0$-coloring $c$ of dimension $2$ by $2$ colors such that there exists no Borel monochromatic reduction to $E_1$ for $c$.
\end{lemma}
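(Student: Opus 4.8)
The plan is to reuse verbatim the Borel $E_0$-coloring $c$ of dimension $2$ by $2$ colors constructed in the proof of Theorem~\ref{thm:main}, together with the auxiliary equivalence relation $F$ and the index-$2$ subgroup $\Gamma_1\leq\Gamma_0$. Recall from that proof the three facts we will lean on: each $E_0$-class splits into exactly two $F$-classes; $F$ is generated by the subaction of $\Gamma_1$; and any subset of a single $E_0$-class on which $c$ is constant with value $2$ has size at most $2$. Consequently, any \emph{infinite} $c$-monochromatic subset of an $E_0$-class must take the color $1$ on all of its pairs, and hence — since color $1$ means $F$-related and $F$ is transitive — lies entirely within a single $F$-class.

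Suppose toward a contradiction that $S:2^{\mathbb{N}}\to(2^{\mathbb{N}})^{\mathbb{N}}$ is a Borel monochromatic reduction to $E_1$ for $c$. For each $x$ I set $A_x=\{S(x)(i):i\in\mathbb{N}\}$. By the injectivity of $S(x)$ (condition (1)) this set is infinite, and by condition (2) it is a $c$-monochromatic subset of $[x]_{E_0}$, so by the previous paragraph $A_x$ is contained in a single $F$-class.

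The heart of the argument, and what I expect to be the main obstacle, is to show that the $F$-class containing $A_x$ depends only on the $E_0$-class of $x$. Suppose $xE_0y$. By condition (3) we have $S(x)\,E_1\,S(y)$, so there is $i_0$ with $S(x)(j)=S(y)(j)$ for all $j\geq i_0$; by injectivity of $S(x)$ the set $\{S(x)(j):j\geq i_0\}$ is an infinite subset of $A_x\cap A_y$. In particular $A_x$ and $A_y$ intersect, and since each lies in a single $F$-class they must lie in the \emph{same} $F$-class. This is exactly where the added flexibility of $E_1$ fails to help: although $S(x)$ and $S(y)$ are only required to agree eventually, their common tail already forces $A_x$ and $A_y$ into one $F$-class. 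Thus $x\mapsto(\text{the $F$-class of }A_x)$ is constant on each $E_0$-class, selecting a distinguished $F$-class within it.

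Granting this, I would define $W=\{y\in2^{\mathbb{N}}: y\,F\,S(y)(0)\}$, the set of points lying in the distinguished $F$-class of their own $E_0$-class. Then $W$ is Borel (as $S$ and $F$ are Borel), it is $F$-invariant (its membership depends only on the distinguished class, which is $E_0$-invariant), and it meets every $E_0$-class in exactly one of its two $F$-classes. Taking $g_0=\{0\}\in\Gamma_0\setminus\Gamma_1$, left translation by $g_0$ interchanges the two $F$-classes of each $E_0$-class, so $g_0\cdot W=2^{\mathbb{N}}\setminus W$, with $W\cup g_0\cdot W=2^{\mathbb{N}}$ and $W\cap g_0\cdot W=\varnothing$. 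By the first topological $0$–$1$ law, exactly as applied in the proof of Theorem~\ref{thm:main}, the $F$-invariant Borel set $W$ is meager or comeager. But $g_0$ acts as a homeomorphism of $2^{\mathbb{N}}$, so $g_0\cdot W$ has the same Baire category as $W$ while being its complement, which is impossible in the perfect Polish space $2^{\mathbb{N}}$. This contradiction shows that no Borel monochromatic reduction to $E_1$ exists for $c$.
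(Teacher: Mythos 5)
Your proof is correct, and its first half is exactly the paper's argument: take the coloring $c$ and the index-two subrelation $F$ from the proof of Theorem~\ref{thm:main}, note that each $A_x=\{S(x)(i)\colon i\in\mathbb{N}\}$ is an infinite monochromatic set and hence lies in a single $F$-class, and use the common tail of $S(x)$ and $S(y)$ to see that this $F$-class depends only on the $E_0$-class. Where you diverge is the endgame. The paper simply sets $Y=\{S(x)(i)\colon i\in\mathbb{N},\ x\in X\}$ and observes that $Y$ is a Borel $E_0$-monochromatic, infinite complete section for $c$, contradicting the fact $E_0\not\rightarrow_B(E_0)^2_2$ already established in the proof of Theorem~\ref{thm:main}; you instead re-run the category argument from scratch on the set $W=\{y\colon y\,F\,S(y)(0)\}$. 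Both endgames work, and each buys something. The paper's is shorter, since it recycles a statement already proved. Yours has a genuine technical advantage: your $W$ is Borel by inspection, being the preimage of the Borel set $F$ under the Borel map $y\mapsto(y,S(y)(0))$, whereas the Borelness of the paper's $Y$ --- a forward image, not a preimage --- is not immediate; it holds because the map $(x,i)\mapsto S(x)(i)$ is countable-to-one (as $S(x)(i)\in[x]_{E_0}$), so one must invoke the Luzin--Novikov theorem, a point the paper leaves implicit. So your version trades brevity for a self-contained argument that sidesteps this subtlety.
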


\begin{proof} Consider the $E_0$-coloring $c$ defined in the proof of Theorem~\ref{thm:main}. Let $F$ be the subequivalence relation of $E_0$ also defined in that proof. Assume $S$ is a Borel monochromatic reduction to $E_1$ for $c$. Then for each $x$, $\{S(x)(i)\colon i\in\mathbb{N}\}$ is an infinite monochromatic subset of $[x]_{E_0}$, hence it is included in a single $F$-class. If $xE_0y$, then $S(x)E_1S(y)$, and it follows that $\{S(x)(i)\colon i\in\mathbb{N}\}$ and $\{S(y)(i)\colon i\in\mathbb{N}\}$ are included in a single $F$-class. Now let $Y=\{S(x)(i)\,:\, i\in\mathbb{N},\ x\in X\}$. Then $Y$ is a Borel $E_0$-monochromatic, infinite complete section for $c$, a contradiction.
\end{proof}
	
	To remedy this, we impose additional requirements on the $E$-coloring as in the following definition.
	
\begin{definition}
We call an $E$-coloring $c:[X]^n_E\to\{1,\dots, k\}$ {\em almost transitive} if for any integer $m\geq n$, $A\in [X]^m_E$ and $B_1, B_2\in [A]^{n-1}$, we have
$$ c(\{z\}\cup B_1)=c(\{z\}\cup B_2) $$
for all but finitely many $z\in [A]_E\setminus (B_1\cup B_2)$, where $[A]_E$ is the $E$-saturation of $A$.
\end{definition}

A trivial example of a Borel almost transitive $E$-coloring is the constant coloring. For an example of a nontrivial Borel almost transitive $E$-coloring, consider
a locally finite Borel graph $G$ on a standard Borel space $X$. Let $E_G$ be the equivalence relation given by the connected components of $G$. For distinct $x, y\in X$ with $xE_Gy$, define
$$ c(\{x,y\})=\left\{\begin{array}{ll}1 & \mbox{if there is an edge between $x$ and $y$ in $G$,} \\
 2 & \mbox{otherwise.}
 \end{array}\right.
$$
Then $c$ is a Borel almost transitive $E$-coloring of dimension $2$.

We are now ready to prove a Ramsey-type property for hyperfinite nonsmooth equivalence relations.
	
\begin{theorem}\label{thm:monored} Let $X$ be a standard Borel space and let $E$ be an aperiodic hyperfinite equivalence relation. Let $n, k\geq 1$ be integers. Then for any Borel almost transitive $E$-coloring $c:[X]^n_E\to\{1,\dots, k\}$, there is a Borel monochromatic reduction to $E_1$ for $c$.
\end{theorem}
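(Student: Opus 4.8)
The plan is to exploit the self-similar finite structure of a hyperfinite relation so that the reduction to $E_1$ comes essentially for free, reducing the whole problem to a purely finitary monochromatic selection governed by almost transitivity. By the Dougherty--Jackson--Kechris theorem quoted above we may fix a Borel assignment of orderings $<_C$ of order type $\zeta$ on the $E$-classes, together with a Borel filtration $F_0\subseteq F_1\subseteq\cdots$ of $E$ by finite Borel equivalence relations whose classes are $<_C$-intervals, with $E=\bigcup_m F_m$. Writing $I_m(x)=[x]_{F_m}$ for the resulting finite intervals increasing to $[x]_E$, the key observation is that if $S(x)(m)$ is a point of $I_m(x)$ chosen by a Borel finitary rule depending only on boundedly much high-level interval data, then condition~(3) holds automatically: if $xEy$ then $I_m(x)=I_m(y)$ for all large $m$, so $S(x)$ and $S(y)$ agree from some index on and hence $S(x)\,E_1\,S(y)$; if $x,y$ are $E$-inequivalent then $S(x)(m)$ and $S(y)(m)$ always lie in distinct $E$-classes, so $S(x)\,E_1\,S(y)$ fails. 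Allowing the rule at level $m$ to consult also $I_{m-1}(x)$ perturbs, per $E$-class, only finitely many coordinates, which is still compatible with $E_1$. Thus the entire content of the theorem is the construction of a single Borel finitary rule which, run along any increasing chain of finite intervals carrying an almost transitive coloring, produces an injective sequence with monochromatic range.

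Carrying out this finitary selection is where almost transitivity enters. Inside each finite interval the classical Ramsey theorem is available, and the dimension $n$ can be lowered toward the one-dimensional case by fixing a base $(n-1)$-subset exactly as in the proof of Lemma~\ref{lem:strongweak}; crucially, such a base point can be chosen canonically because $I_m(x)$ is \emph{finite}, whereas globally no Borel transversal exists. One checks that the induced lower-dimensional colorings remain almost transitive, so it suffices to produce from the intervals a canonical \emph{dominant color} $u$ and to grow a monochromatic set of color $u$ as $m$ increases. Almost transitivity supplies exactly the required extendability: fixing any finite $(n-1)$-configuration $B$, the color of $\{z\}\cup B$ is independent of $B$ for all but finitely many $z$, so a fixed color $u$ persists cofinitely and any finite monochromatic set can be enlarged by cofinitely many points of every sufficiently large interval. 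A natural rule then suggests itself --- at level $m$ select a $<_C$-least point of $I_m(x)\setminus I_{m-1}(x)$ that is $u$-joined to the points chosen earlier --- and its delicate point, addressed next, is that this appears to use unbounded memory of the earlier selections.

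The main obstacle is precisely to reconcile the two demands above: for condition~(3) the choice at level $m$ must depend only on bounded, high-level data, yet the points chosen at different levels must be \emph{mutually} monochromatic although each is chosen with essentially no memory of the others. Almost transitivity is the hypothesis that dissolves this tension, since it renders $c(\{z\}\cup B)$ insensitive to the fixed set $B$ up to finitely many exceptional $z$. Consequently the dominant color $u$ is well defined class-uniformly, and at each level at most finitely many previously selected points can fail to be $u$-joined to a new candidate and must be discarded. The heart of the argument is to verify that these conflicts are, per $E$-class, finite in total, so that $R(x)=\{S(x)(m):m\in\mathbb{N}\}$ is genuinely monochromatic while varying by only a finite set as $x$ ranges over an $E$-class. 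This finiteness is also exactly what keeps the construction consistent with nonsmoothness: the sets $R(x)$ cannot be assembled into one Borel $E$-invariant monochromatic set, as that would contradict Theorem~\ref{thm:main}, and it is the permitted finite variation of $R(x)$ within each class --- afforded by targeting $E_1$ rather than equality --- that makes a Borel construction possible at all.
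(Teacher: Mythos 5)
Your overall framework is the right one --- fix the Dougherty--Jackson--Kechris filtration $\{F_m\}$ and ordering of type $\zeta$, make the stage-$m$ choice depend on $F_m$-class data so that the $E_1$-reduction condition comes for free, and use almost transitivity to keep the selection extendable --- and this is indeed the skeleton of the paper's proof. But the core selection step, which you yourself flag as ``the heart of the argument,'' is where the proposal breaks down, in two places. First, the ``dominant color'' $u$ does not exist in the form you use it. Almost transitivity says: for a fixed finite configuration $A$, for all but finitely many $z$ the value $c(\{z\}\cup B)$ is independent of $B\in[A]^{n-1}$. It does \emph{not} say that this $B$-independent value is the same color for cofinitely many $z$; the common value may still depend on $z$, several colors can each occur infinitely often, and (since goodness for a larger configuration refines goodness for a smaller one) the set of colors that remain viable can strictly shrink as the configuration grows. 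So ``a fixed color $u$ persists cofinitely'' is a non sequitur, and ``the dominant color $u$ is well defined class-uniformly'' cannot be arranged upfront: which colors survive depends circularly on the very points you select. Second, your repair mechanism --- discard the finitely many previously selected points that conflict with a new candidate --- is left entirely unverified (that per-class conflicts are finite in total is precisely what needs proof), and discarding also undermines the coherence that was supposed to give $S(x)\,E_1\,S(y)$, since points revised at different stages along different starting points $x,y$ of the same class need not stabilize to $E_1$-equivalent sequences.

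The paper resolves both difficulties with a mechanism your proposal is missing: it runs the construction \emph{in parallel for every color} $a\in\{1,\dots,k\}$, and it allows \emph{unbounded but class-coherent} memory rather than bounded memory. Concretely, $f^a_{i+1}(x)$ is the $\prec$-least point, above everything $F_{i+1}$-equivalent to $x$, that is $a$-joined to every $(n-1)$-subset of the finite set $K^a_x=\{f^a_j(y)\colon y\in[x]_{F_{i+1}},\,j\le i\}$ of \emph{all} earlier selections made by \emph{all} points of $[x]_{F_{i+1}}$ --- and it is defined only when infinitely many such candidates exist. Since each $f^a_j$ is constant on $F_j$-classes, the set $K^a_x$ is constant on $F_{i+1}$-classes, so full memory is harmless: coherence, not boundedness, is what makes the $E_1$-reduction work, and no discarding is ever needed because the infinitude of candidates is built into the domain of $f^a_{i+1}$ (which, by almost transitivity, is $E$-invariant). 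The color is then determined at the end rather than at the start: if for some $x$ every color's construction died at a finite stage, one collects the union $K$ of the finitely many configurations involved, and almost transitivity plus pigeonhole produce a color $b$ with infinitely many candidates compatible with all of $[K]^{n-1}$, contradicting the death of color $b$. Without this parallel-color, invariant-domain device (or an equivalent substitute), the selection scheme in your proposal cannot be started, let alone completed.
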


\begin{proof} Let $\{F_i\}_{i\in\mathbb{N}}$ be an increasing sequence of Borel finite equivalence relations such that $E=\bigcup_{i} F_i$. Without loss of generality assume $F_0$ is the equality relation. Let $C\mapsto <_C$ be a Borel assignment associating to each $E$-class $C$ a linear ordering $<_C$ of $C$. Assume that the order type of $(C, <_C)$ is $\zeta$ for every $E$-class $C$. We use $\prec$ to denote $\bigcup_{x\in X} (<_{[x]_E})$. For $x\in X$, define
$$[x]_{\succ}=\{y\in[x]_E\colon x\prec y\}. $$
		
Let $c:[X]^n_E\to\{1,\dots,k\}$ be a Borel almost transitive $E$-coloring. For each $a\in \{1,\dots, k\}$, we define an $E$-invariant Borel set $X_a$ and a sequence of partial Borel functions $\{f^a_i\}_{i\in\mathbb{N}}$ so that the following hold for any $i\in\mathbb{N}$:%\textcolor{red}{ satisfying following conditions\st{, where each $\mbox{\rm dom}(f^a_i)$ is an $E$-invariant Borel subset of $X$, by induction on $i$ as follows. Let $f^a_0(x)=x$ for all $x\in X$ and $a\in\{1,\dots, k\}$. For $i\geq 0$, assume $f^a_0,\dots, f^a_i$ have been defined so that the following hold}}:
\begin{enumerate}
\item $X=\mbox{\rm dom}(f^a_0)\supseteq \cdots\supseteq \mbox{\rm dom}(f^a_i)$ and each of them is an $E$-invariant Borel subset of $X$;
\item for any $x\in \mbox{\rm dom}(f^a_i)$, if we let $$A=\{f^a_j(y)\colon y\in [x]_{F_i},\ j<i\},$$ then for any $B\in [A]^{n-1}$ with $f^a_i(x)\not\in B$, we have $c(\{f^a_i(x)\}\cup B)=a$;
\item for any $j\leq i$ and any $x, y\in \mbox{\rm dom}(f^a_i)$ with $xF_jy$, we have $f^a_j(x)=f^a_j(y)$;
\item for any $x\in{\rm dom}(f^a_i)$, $xEf^a_i(x)$.
\end{enumerate}
We define $f^a_i$ by induction on $i$. First let $f^a_0(x)=x$ for all $x\in X$ and $a\in\{1,\dots, k\}$. For $i\geq 0$, assume $f^a_0,\dots, f^a_i$ have been defined so that the above conditions hold.
Now for any $x\in \mbox{\rm dom}(f^a_i)$, let 
$$\begin{array}{l}
K^a_x=\{f^a_j(y)\colon y\in[x]_{F_{i+1}},\ j\leq i\}, \\ \\
C^a_x=\{\,z\in[x]_E\colon
\mbox{for all $L\in [K^a_x]^{n-1}$ with $z\not\in L$, $c(\{z\}\cup L)=a$}
\,\},  \mbox{ and}\\ \\
A^a_x=C^a_x\cap \bigcap\left\{[y]_{\succ}\colon y\in K^a_x\right\}.
\end{array} $$
Here $K^a_x$ is the set for condition (2) at the next stage and $C^a_x$ is the set of candidates for the values of $f^a_{i+1}(x)$ which would accomplish condition (2). Now we have two cases according to whether $A^a_x$ is finite. If $A^a_x$ is finite then $f^a_{i+1}(x)$ is undefined. Otherwise, define
$$f^a_{i+1}(x)=\mbox{the $\prec$-least element of } A^a_x.$$

First notice that $\mbox{\rm dom}(f^a_{i+1})$ is indeed Borel since whether $A^a_x$ is finite is a Borel condition. Also all quantifiers involved in construction are inside of $[x]_E$ for every $x$ so by the Feldman--Moore theorem $f^a_{i+1}$ is Borel.

To see that the rest of inductive hypothesis (1) is maintained, take $x,y\in \mbox{\rm dom}(f^a_i)$ so that $xEy$. By the almost transitivity of $c$ and the finiteness of $F_{i+1}$, we have that for all but finitely many $z\in [x]_E$, $z\in C^a_x$ iff $z\in C^a_y$. Thus $A^a_x$ is finite iff $C^a_x$ is $\prec$-bounded, iff $C^a_y$ is $\prec$-bounded, iff $A^a_y$ is finite. Therefore $x\in \mbox{\rm dom}(f^a_{i+1})$ iff $y\in \mbox{\rm dom}(f^a_{i+1})$.
It is easy to see that inductive hypothesis (4) is also maintained by this construction. For inductive hypothesis (2), notice that $K^a_x$ is the set $A$ in condition (2) for step $i+1$ and choosing $f^a_{i+1}(x)$ from $C^a_x$ guarantees the requirement for $f^a_{i+1}$ in condition (2), as it is the set of all possible values which would accomplish condition (2). For inductive hypothesis (3), the only new case is $j=i+1$. Just notice that if $xF_{i+1}y$, then $K^a_x=K^a_y$, from which it follows that $C^a_x=C^a_y$ and $A^a_x=A^a_y$, and hence $f^a_i(x)=f^a_i(y)$. This finishes the inductive definition of $\{f^a_i\}_{i\in\mathbb{N}}$. 

We then define $X_a=\bigcap_i \mbox{\rm dom}(f^a_i)$. 
		
Now, for any $x\in X_a$, $i\geq n$ and $I\in [\{0,\dots, i-1\}]^{n-1}$, we have
$$ c(\{f_j(x)\colon j\in \{i\}\cup I\})=a $$
by condition (1). Also by condition (3) and the fact that the sequence $\{F_i\}_{i\in\mathbb{N}}$ is increasing, we have that if $i\geq j$ and $x, y\in X_a$ with $xF_jy$, then $f^a_i(x)=f^a_i(y)$.
Hence, if we let $S_a(x)(i)=f^a_i(x)$ for $x\in X_a$, then
$S_a\colon X_a\to X_a^\mathbb{N}$ is a Borel monochromatic reduction to $E_1$. Furthermore, all $X_a$ are $E$-invariant. 

Inductively define $X'_a$ for $1\leq a\leq k$ by letting $X'_0=X_0$ and $X'_a=X_a\setminus(\bigcup_{i<a}X'_i)$ for each $1\leq a\leq k$. Then each $X'_a$ is still $E$-invariant, $\bigcup_{1\leq a\leq k}X_a=\bigcup_{1\leq a\leq k}X'_a$ and $X'_a$ are pairwise disjoint. Let $S=\bigcup_{1\leq a\leq k} (S_a\upharpoonright X'_a)$. Then $S$ is a well-defined Borel map as it is a disjoint finite union of Borel maps. For any $x,y\in X$ with $xEy$, $S(x)E_1S(y)$ since for the unique $1\leq a\leq k$ with $x,y\in X'_a$, $S_a(x)E_1S_a(y)$. To see that $S$ is a reduction, just note that $S_a(x)(i)Ex$ for all $i\in\mathbb{N}$ and all $x\in\bigcup_a X_a$. In conclusion, $S$ is a Borel monochromatic reduction from $E$ to $E_1$ for $c$ defined on $\bigcup_a X_a$.

To complete the proof, it suffices to verify that $X\subseteq \bigcup_a X_a$. For this, assume $x\not\in \bigcup_a X_a$. Then for any $a\in \{1,\dots,k\}$, there is $i_a$ such that $x\in \mbox{\rm dom}(f^a_{i_a})\setminus\mbox{\rm dom}(f^a_{i_a+1})$. Let
$$ K=\left\{f^a_j(y)\colon a\in\{1,\dots,k\},\ j\leq i_a,\ y\in[x]_{F_{i_a+1}}\right\}. $$
Then $K$ is finite. By the almost transitivity of $c$, there is a $b\in\{1,\dots, k\}$ such that for all $L\in [K]^{n-1}$, for all but finitely many $z\in [x]_E\setminus L$, $c(\{z\}\cup L)=b$. Thus, at stage $i_b$ of the construction of $X_b$, since $x\in \mbox{\rm dom}(f^b_{i_b})$ and $K^b_x\subseteq K$, we have that $A^b_x$ is infinite. This contradicts $x\not\in \mbox{\rm dom}(f^b_{i_b+1})$.

The proof of the theorem is complete.
\end{proof}

\begin{corollary} Let $X$ be a standard Borel space and $E$ be an aperiodic countable Borel equivalence relation. Let $n, k\geq 1$ be integers. Then $E$ is hyperfinite iff for any Borel almost transitive $E$-coloring $c$ of dimension $n$ over $X$ by $k$ many colors, there is a Borel monochromatic reduction to $E_1$ for $c$. 
\end{corollary}

\begin{proof} We only need to show $(\Leftarrow)$. Since any constant map is a Borel almost transitive $E$-coloring, there is a Borel reduction from $E$ to $E_1$. By a theorem of Kechris--Louveau (\cite[Theorem 1]{KL1997}), every countable Borel equivalence relation which is Borel reducible to $E_1$ is in fact hyperfinite. Thus $E$ is hyperfinite.
\end{proof}

\section*{Acknowledgments}

We would like to thank the anonymous referee for a very careful reading of an earlier version of the paper and for many helpful suggestions. In particular, Lemma~\ref{lem:smallbig}, Theorem~\ref{thm:main2} and its proof are suggested by the referee.

%    Bibliographies can be prepared with BibTeX using amsplain,
%    amsalpha, or (for "historical" overviews) natbib style.
\bibliographystyle{amsplain}
%    Insert the bibliography data here.

\thebibliography{999}

\bibitem{DJK1994}
R. Dougherty, S. Jackson, and A.S. Kechris,
\textit{The structure of hyperfinite Borel equivalence relations}, Trans. Amer. Math. Soc. 341 (1991), no. 1, 193--225.

\bibitem{ER1956}
P. Erd\H{o}s and R. Rado,
\textit{A partition calculus in set theory}, Bull. Amer. Math. Soc. 62 (1956), 427--489.

\bibitem{Gao2009}
S. Gao, Invariant Descriptive Set Theory. CRC Press, Boca Raton, 2009.

\bibitem{Kechris1995}
A.S. Kechris, Classical Descriptive Set Theory. Springer--Verlag, New York, 1995.

\bibitem{KL1997}
A.S. Kechris and A. Louveau,
\textit{The classification of hypersmooth equivalence relations}, J. Amer. Math. Soc. 10 (1997), no. 1, 215--242.

%\bibitem{KST1999}
%A.S. Kechris, S. Solecki and S. Todorcevic, \textit{Borel chromatic numbers}, Adv. Math. 141 (1999), no. 1, 1--44.

\bibitem{Ramsey1930}
F.P. Ramsey, \textit{On a problem of formal logic}, Proc. London Math. Soc. 30 (1930), no. 1, 264--286.

\end{document}